\documentclass{amsproc}
\usepackage{amssymb}
\usepackage{amsmath}
\usepackage{epsfig}
\usepackage{graphicx}
\usepackage[all]{xy}
\usepackage{color}
\usepackage{enumitem,kotex}  
\parindent=0pt                  
\parskip=0.15 true in
\usepackage{setspace}
\usepackage{amsthm,amsfonts,amssymb,epsfig,graphics,amsmath,amsbsy,subfigure}

\newcommand{\intL}{\int\limits}

\newcommand{\RR}{\mathbb{R}}
\newcommand{\xx}{\mathbf{x}}
\newcommand{\uu}{\mathbf{u}}

\newtheorem{thm}{Theorem}

\newtheorem{lem}[thm]{Lemma}
\newtheorem{prop}[thm]{Proposition}

\begin{document}
\title[Inversion for variable-speed wave equations on Bounded Domains]{Explicit inversion for variable-speed wave equations on bounded domains}
\author{Sunghwan Moon}
\address{Department of Mathematics, College of Natural Sciences, Kyungpook National University, Daegu 41566, Republic of Korea}
\curraddr{}
\email{sunghwan.moon@knu.ac.kr}
\thanks{The work  of S. Moon was supported by NRF-2022R1C1C1003464 and RS-2023-00217116.}
\author{Ihyeok Seo}
\address{Department of Mathematics, Sungkyunkwan University, Suwon 16419, Republic of Korea}
\curraddr{}
\email{ihseo@skku.edu}
\thanks{The work  of I. Seo was supported by RS-2022-NR069672.}

\subjclass[2020]{Primary 35L05;  35R30; 	92C55}

\keywords{keywords: photoacoustic, thermoacoustic tomography, wave equation, inversion}

\date{}

\dedicatory{}

%
\begin{abstract}
We study the reconstruction of the initial pressure $f(\xx)=p(\xx,0)$ for the wave model
\[
\partial_t^2 p(\xx,t)=c(\xx)\triangle_{\xx}p(\xx,t)\qquad (\xx,t)\in\Omega\times[0,\infty),
\]
posed on a bounded domain $\Omega$ with variable sound speed $c(\cdot)$. From time-resolved boundary measurements, we consider two settings: (i) measurement of $p|_{\partial\Omega\times[0,\infty)}$ under a Robin boundary condition $p+\alpha\,\partial_\nu p=0$ on $\partial\Omega\times[0,\infty)$ with $\alpha\gneq 0$, and (ii) measurement of $\partial_\nu p|_{\partial\Omega\times[0,\infty)}$ under a Dirichlet boundary condition $p=0$ on $\partial\Omega\times[0,\infty)$. Within a unified framework, we present \emph{explicit formulas} that recover the spectral coefficients $\langle f,\phi_k^B\rangle$ of $f$ with respect to the eigenfunction bases of the operator $-c(\cdot)\triangle_{\xx}$ for boundary types $B\in\{D,R\}$. The framework integrates variable sound speed with Dirichlet/Robin boundary conditions in a single setting, enabling direct coefficient-level recovery from boundary data.
\end{abstract}

\maketitle
 \section{Introduction}

Recently, many tomography researchers have considered combining different physical signal types to leverage their complementary strengths while mitigating individual limitations. The most successful example is photoacoustic tomography (PAT), also known as optoacoustic or thermoacoustic tomography. PAT is an emerging technology for noninvasive medical imaging based on the photoacoustic effect discovered by Bell in 1880 \cite{bell80}, wherein absorption of electromagnetic (EM) energy (e.g., light or radio-frequency illumination) induces thermoelastic expansion and generates acoustic waves. Ultrasound imaging typically provides high spatial resolution but low contrast, whereas EM illumination offers strong molecular contrast yet low resolution. PAT integrates these advantages, using optical contrast to seed an acoustic wave that can be recorded with ultrasound detectors, thus achieving high-contrast, high-resolution imaging.

In PAT, a short, non-ionizing EM pulse irradiates the object approximately uniformly. The absorbed energy produces thermoelastic expansion of tissue and, consequently, an acoustic pressure wave that propagates through the object and is recorded by detectors placed around it. Although ultrasound images alone may suffer from low contrast, it is well known that cancerous tissue tends to absorb significantly more energy than healthy tissue. Hence, the strength of the acoustic source encodes the spatial distribution of EM absorption and can provide sensitive contrast for early detection.

We model the acoustic pressure $p$ by the wave equation in a bounded domain $\Omega$ with smooth boundary $\partial \Omega$,
\begin{equation}\label{eq:pdeofpat}
\partial_t^2 p(\xx ,t)=c(\xx)\triangle_{\xx }p(\xx ,t)\qquad(\xx ,t)\in\Omega\times[0,\infty),
\end{equation}
where the sound speed $c\in C^\infty(\bar \Omega)$ is strictly positive, i.e., $c_m\le c(\xx)\le c_M$ for some constants $0<c_m\le c_M<\infty$ and $\bar \Omega$ is the closure of $\Omega$. Because biological sound speeds are relatively small and practical systems image bounded objects, it is natural to work on a bounded $\Omega$. We consider measurements acquired on the boundary $\partial\Omega$ by detectors over time, and the central mathematical task is to determine the initial source
\[
f(\xx)=p(\xx,0)
\]
from boundary data. Two prototypical data–boundary configurations arise:
\begin{itemize}
\item[1)] $p$ is measured on $\partial \Omega$ while the wave satisfies a Robin boundary condition
\[
p+\alpha\,\partial_\nu p=0 \quad\text{on } \partial \Omega\times[0,\infty),\qquad \alpha\gneq 0;
\]
\item[2)] $\partial_\nu p$ is measured on $\partial \Omega$ while the wave satisfies a Dirichlet boundary condition
\[
p=0 \quad\text{on } \partial \Omega\times[0,\infty).
\]
\end{itemize}
Here $\nu$ denotes the outward unit normal to $\partial\Omega$, so that $\partial_\nu=\nu\cdot\triangledown$ is the normal derivative.

The recovery of $f$ from boundary measurements has been extensively studied in the constant-speed case; see, for example, \cite{ammaribjk10,ammarigjn13,anastasiozmr07,moonip18,bukhgeimk78,dok18,dreierh20,finchhr07,finchpr04,haltmeiersbp04,moonjop16,kostlifbw01,kuchment14book,kunyansky12,sandbichlerkbbh15,xuw05,moonzangerlip19}. Inhomogeneous media have also been considered from different angles; e.g., \cite{ammarikk12} studied related inverse problems in inhomogeneous settings without the full wave equation, and several works analyzed variable-speed wave propagation on the whole space, including uniqueness and reconstruction aspects \cite{agranovskyk07,liuu15,knoxm20,stefanovu09}. For instance, \cite{agranovskyk07} derived an analytic formula for spherical acquisition, \cite{knoxm20,stefanovu09} analyzed recovery of $f$ with known $c $, and \cite{liuu15} gave sufficient conditions for unique determination of both $c$ and internal sources.

In contrast to whole-space studies or constant-speed models, we investigate the bounded-domain problem with variable speed $c$ under physically relevant boundary conditions on a bounded domain $\Omega$ with smooth boundary. 
Dirichlet and Neumann boundaries are routinely used to model sound-soft and sound-hard interfaces, respectively, while Robin conditions represent impedance-type behavior interpolating between them \cite{claytone77,holmank15,sochacki88}. 

Stefanov and Yang \cite{stefanovy15} develop sharp time-reversal in closed domains with reflective boundaries, relying on Neumann-series constructions rather than coefficient-wise identities. Nguyen and Kunyansky \cite{nguyenk16} introduce a dissipative time-reversal under mixed boundary operators to stabilize backward propagation (often with exponential convergence as the recording time increases), but again do not furnish explicit spectral-coefficient formulas. Kunyansky, Holman and Cox \cite{kunyanskyhc13} analyze a rectangular resonant cavity with essentially constant sound speed and propose a fast iterative scheme tailored to that geometry. Moon, Hur, and Moon \cite{moonsiims23} assume a spherical acquisition surface with radial symmetry and study the SVD of the forward operator, yielding formulas specialized to the sphere. By contrast, this paper treats an arbitrary bounded domain with spatially varying sound speed and accommodates both Dirichlet and Robin boundary types within a single spectral framework. We derive explicit identities that recover the spectral coefficients directly from time-resolved boundary traces, providing a geometry-agnostic, coefficient-level inversion that extends beyond spherical settings and—to our knowledge—offers the first unified explicit recovery for both boundary conditions in variable-speed media.

For later reference, we define the wave-forward operators. Let $\mathcal W_R f(\xx,t)$ denote the solution of \eqref{eq:pdeofpat} with initial data
\begin{equation}\label{eq:initial}
p(\xx ,0)=f(\xx ) \quad\mbox{and}\quad \partial _t p(\xx ,0)=0 \qquad (\xx\in\Omega),
\end{equation}
subject to the Robin boundary condition
\[
\mathcal W_R f+\alpha\,\partial_\nu \mathcal W_R f=0 \quad\text{on }\partial\Omega\times[0,\infty),\qquad \alpha\gneq 0.
\]
Similarly, let $\mathcal W_D f(\xx,t)$ denote the solution of \eqref{eq:pdeofpat}–\eqref{eq:initial} with the Dirichlet boundary condition $\,\mathcal W_D f=0$ on $\partial\Omega\times[0,\infty)$. We formulate two inverse problems on the weighted $L^2$ space $L^2(\Omega,c(\xx)^{-1}{\rm d}\xx)$:
\[
\textbf{[Robin data]} \;\; \text{Reconstruct } f\in L^2(\Omega,c(\xx)^{-1}{\rm d}\xx) \text{ from } \mathcal W_R f\big|_{\partial\Omega\times[0,\infty)};
\]
\[
\textbf{[Dirichlet data]} \;\; \text{Reconstruct } f\in L^2(\Omega,c(\xx)^{-1}{\rm d}\xx) \text{ from } \partial_\nu \mathcal W_D f\big|_{\partial\Omega\times[0,\infty)}.
\]
The main results provide explicit coefficient-recovery formulas for these settings under the above assumptions, and they are stated and proved in the next sections.

\section{Preliminaries}\label{sec:prelim}

Let $\Omega\subset\mathbb{R}^n$ be a bounded domain with a smooth boundary and outward unit normal $\nu$, and let the sound speed $c\in L^\infty(\Omega)$ satisfy $0<c_m\le c(\xx)\le c_M<\infty$. We work in the weighted space $L^2(\Omega,c(\xx)^{-1}{\rm d}\xx)$ with inner product
\[
\langle f,g\rangle:=\int_\Omega f(\xx)\,g(\xx)\,c(\xx)^{-1}\,{\rm d}\xx .
\]

We consider the two elliptic boundary value problems that will generate orthonormal bases adapted to the Robin and Dirichlet boundaries:
\begin{align}
\label{eq:robin-Poisson}
\begin{cases}
-\,c(\xx)\Delta u(\xx)=f(\xx) & \text{in }\Omega,\\
u+\alpha\,\partial_\nu u=0 & \text{on }\partial\Omega,\qquad \alpha>0,
\end{cases}
\qquad
\begin{cases}
-\,c(\xx)\Delta u(\xx)=f(\xx) & \text{in }\Omega,\\
u=0 & \text{on }\partial\Omega.
\end{cases}
\end{align}
By a standard Lax--Milgram argument (using the boundary coercivity inequality for the Robin case and the Poincar\'e inequality for the Dirichlet case), for every $f\in L^2(\Omega,c^{-1}(\xx){\rm d}\xx)$ there exists a unique weak solution $u$ in $H^1(\Omega)$ (Robin) or $H_0^1(\Omega)$ (Dirichlet).

Define the solution operator $T:L^2(\Omega,c^{-1}(\xx){\rm d}\xx)\to H(\Omega)$ by $Tf=u$, where $u$ solves \eqref{eq:robin-Poisson} in the relevant space $H(\Omega)\in\{H^1(\Omega),H_0^1(\Omega)\}$. The mapping $T$ is bounded, and by compactness of the embedding $H(\Omega)\hookrightarrow L^2(\Omega)$ it induces a compact operator on $L^2(\Omega,c^{-1}(\xx){\rm d}\xx)$. Moreover, $T$ is self-adjoint with respect to $\langle\cdot,\cdot\rangle$.

By the spectral theorem for compact self-adjoint operators, $L^2(\Omega,c^{-1}(\xx){\rm d}\xx)$ admits orthonormal eigenfunction bases $\{\phi^R_l\}_{l\ge1}$ and $\{\phi^D_k\}_{k\ge1}$ associated with the Robin and Dirichlet realizations of $-c(\xx)\Delta$, i.e.
\[
c(\xx)\Delta \phi^R_l(\xx) + \lambda_{R,l}^2\,\phi^R_l(\xx)=0 \;\text{ in }\Omega, 
\quad \phi^R_l+\alpha\,\partial_\nu\phi^R_l=0 \;\text{ on }\partial\Omega,
\]
\[
c(\xx)\Delta \phi^D_k(\xx) + \lambda_{D,k}^2\,\phi^D_k(\xx)=0 \;\text{ in }\Omega, 
\quad \phi^D_k=0 \;\text{ on }\partial\Omega,
\]
with strictly positive eigenvalues $\lambda_{R,l}^2,\lambda_{D,k}^2>0$. 

Consequently, every $f\in L^2(\Omega,c^{-1}(\xx){\rm d}\xx)$ has the (convergent) expansions
\[
f=\sum_{l=1}^\infty \langle f,\phi^R_l\rangle\,\phi^R_l
\qquad\text{and}\qquad
f=\sum_{k=1}^\infty \langle f,\phi^D_k\rangle\,\phi^D_k .
\]
The statements above are standard; short proofs and precise references are given in Appendix~A.

For the initial data $(p,\partial_t p)|_{t=0}=(f,0)$ in \eqref{eq:initial}, there exists the unique weak solution with boundary type $B\in\{D,R\}$, which
 admits the convergent expansion
\begin{equation}\label{eq:series}
p_B(\xx,t)=\sum_{k=1}^\infty \cos\!\big(\lambda_{B,k} t\big)\,
( f,\phi_k^B)_{L^2_c}\,\phi_k^B(\xx).
\end{equation}
 If $f\in C^\infty(\Omega)$ and $c\in C^\infty(\bar\Omega)$, then $p_B\in C^\infty([0,\infty);L^2(\Omega,c(\xx)^{-1}{\rm d}\xx))$ (more detailed, see \cite[Chapter~7]{evans10}). 


\section{Inversion Procedure}
We provide a method to obtain the initial function $f$ from the two types of boundary data of the wave equation on the bounded  domain $\Omega$ with smooth boundary.
We first record the following elementary identity.

\begin{lem}\label{lem:haltmeier}
For $k,l\geq 1$, we have
\[
\big(\lambda_{R,l}^2-\lambda_{D,k}^2\big)\,\langle\phi^R_l,\phi^D_k\rangle
=\int_{\partial \Omega}\phi^R_l(\xx)\,\partial_\nu \overline{\phi^D_k(\xx)}\,{\rm d}\sigma(\xx).
\]
\end{lem}

\begin{proof}
By the eigenvalue equations $-\Delta_{\xx}\phi^R_l=\lambda_{R,l}^2 c(\xx)^{-1}\phi^R_l$ and
$-\Delta_{\xx}\phi^D_k=\lambda_{D,k}^2 c(\xx)^{-1}\phi^D_k$,
\[
\begin{aligned}
\big(\lambda_{R,l}^2-\lambda_{D,k}^2\big)\langle\phi^R_l,\phi^D_k\rangle
&=\int_{\Omega}\!\!\big(\lambda_{R,l}^2\phi^R_l\overline{\phi^D_k}-\lambda_{D,k}^2\phi^R_l\overline{\phi^D_k}\big)c^{-1}\,{\rm d}\xx\\
&=\int_{\Omega}\!\!\big(-\overline{\phi^D_k}\,\Delta_{\xx}\phi^R_l+\phi^R_l\,\Delta_{\xx}\overline{\phi^D_k}\big)\,{\rm d}\xx\\
&=\int_{\partial \Omega}\!\!\big(-\overline{\phi^D_k}\,\partial_\nu \phi^R_l+\phi^R_l\,\partial_\nu \overline{\phi^D_k}\big)\,{\rm d}\sigma(\xx),
\end{aligned}
\]
where the last step uses Green's second identity. Since $\phi^D_k|_{\partial\Omega}=0$, the first boundary term vanishes and the claim follows.
\end{proof}

\subsection{Inverse Problem with Robin boundary condition}\label{sec:robin}
In this subsection, we recover $f\in L^2(\Omega,c(\xx)^{-1}{\rm d}\xx)$ from the boundary traces of $\mathcal W_R f$. By linearity and the completeness of $\{\phi^D_k\}_{k=1}^\infty$ in $L^2(\Omega,c(\xx)^{-1}{\rm d}\xx)$, it suffices to reconstruct the coefficients $\langle f,\phi_k^D\rangle$.

\begin{thm}\label{thm:co1}
For $f\in L^2(\Omega,c(\xx)^{-1}{\rm d}\xx)$ and every $k\ge 1$, we have 
\[
\langle f,\phi^D_k\rangle
= -\,\lambda_{D,k}^{-1}\!
\lim_{\epsilon\to0}\int_0^\infty\!\!\frac{1}{2}\,e^{-\epsilon t}\sin(\lambda_{D,k}t)
\!\left(\int_{\partial \Omega} \mathcal W_R f(\uu,t)\,\partial_\nu \overline{\phi^D_k(\uu)}\,{\rm d}\sigma(\uu)\right)\!{\rm d}t.
\]
\end{thm}

\begin{proof}
By linearity, it is enough to prove the formula for $f=\phi_l^R$. Since $\mathcal W_R\phi_l^R(\xx,t)=\phi_l^R(\xx)\cos(\lambda_{R,l}t)$,
Lemma~\ref{lem:haltmeier} gives
\[
\int_{\partial \Omega}\mathcal W_R\phi_l^R(\xx,t)\,\partial_\nu \overline{\phi^D_k(\xx)}\,{\rm d}\sigma(\xx)
=\big(\lambda_{R,l}^2-\lambda_{D,k}^2\big)\langle\phi^R_l,\phi^D_k\rangle\,\cos(\lambda_{R,l}t).
\]
Using
\begin{equation}\label{eq:bateman}
\int_0^\infty \frac12 e^{-\epsilon t}\sin(a t)\cos(b t)\,{\rm d}t
=\frac{a+b}{2\big(\epsilon^2+(a+b)^2\big)}+\frac{a-b}{2\big(\epsilon^2+(a-b)^2\big)}
\quad\text{\cite[(6), p.~19]{batemann}}.
\end{equation}
we obtain, for any $\epsilon>0$,
\[
\int_0^\infty \!\!\frac12 e^{-\epsilon t}\sin(\lambda_{D,k}t)\cos(\lambda_{R,l}t)\,{\rm d}t
=\frac{\lambda_{D,k}+\lambda_{R,l}}{2(\epsilon^2+(\lambda_{D,k}+\lambda_{R,l})^2)}
+\frac{\lambda_{D,k}-\lambda_{R,l}}{2(\epsilon^2+(\lambda_{D,k}-\lambda_{R,l})^2)}.
\]
Taking $\epsilon\to0$ yields
\[
\lim_{\epsilon\to0}\int_0^\infty\!\!\frac12 e^{-\epsilon t}\sin(\lambda_{D,k}t)
\!\left(\int_{\partial \Omega}\mathcal W_R\phi_l^R\,\partial_\nu \overline{\phi^D_k}\,{\rm d}\sigma\right)\!{\rm d}t
= -\,\lambda_{D,k}\,\langle\phi^R_l,\phi^D_k\rangle,
\]
which implies the stated identity.
\end{proof}

\subsection{Inverse problem with Dirichlet boundary condition}\label{sec:dirichlet}
In this subsection, we recover $f\in L^2(\Omega,c(\xx)^{-1}{\rm d}\xx)$ from $\partial_\nu \mathcal W_D f$ on $\partial\Omega\times[0,\infty)$; by completeness of $\{\phi^R_l\}_{l=1}^\infty$ in $L^2(\Omega,c(\xx)^{-1}{\rm d}\xx)$ it suffices to reconstruct $\langle f,\phi_l^R\rangle$.

\begin{thm}\label{thm:co2}
For $f\in L^2(\Omega,c(\xx)^{-1}{\rm d}\xx)$ and every $l\ge 1$, we have
\[
\langle f,\phi^R_l\rangle
= \lambda_{R,l}^{-1}\!
\lim_{\epsilon\to0}\int_0^\infty\!\!\frac{1}{2}\,e^{-\epsilon t}\sin(\lambda_{R,l}t)
\!\left(\int_{\partial \Omega} \partial_\nu \mathcal W_D f(\xx,t)\,\overline{\phi^R_l(\xx)}\,{\rm d}\sigma(\xx)\right)\!{\rm d}t.
\]
\end{thm}

\begin{proof}
It suffices to consider $f=\phi_k^D$. Since $\mathcal W_D\phi_k^D(\xx,t)=\phi_k^D(\xx)\cos(\lambda_{D,k}t)$,
Lemma~\ref{lem:haltmeier} yields
\[
\int_{\partial \Omega}\partial_\nu \mathcal W_D\phi_k^D(\xx,t)\,\overline{\phi^R_l(\xx)}\,{\rm d}\sigma(\xx)
=\big(\lambda_{R,l}^2-\lambda_{D,k}^2\big)\langle\phi^D_k,\phi^R_l\rangle\,\cos(\lambda_{D,k}t).
\]
Applying \eqref{eq:bateman} and letting $\epsilon\to0$ gives
\[
\lim_{\epsilon\to0}\int_0^\infty\!\!\frac12 e^{-\epsilon t}\sin(\lambda_{R,l}t)
\!\left(\int_{\partial \Omega}\partial_\nu \mathcal W_D\phi_k^D\,\overline{\phi^R_l}\,{\rm d}\sigma\right)\!{\rm d}t
=\lambda_{R,l}\,\langle\phi^D_k,\phi^R_l\rangle,
\]
and the claim follows by linearity.
\end{proof}
\subsection{Finite-Time Error Decomposition and Bounds for Boundary-Driven Inversion}
In this section, we quantify the reconstruction error when boundary measurements are available only up to a finite recording time \(T<\infty\). 
\begin{thm}\label{thm:finite}
For $f\in L^2(\Omega,c(\xx)^{-1}{\rm d}\xx)$ , we have
\begin{eqnarray}
&\langle f,\phi^D_k\rangle=-\dfrac{1}{\lambda_{D,k}}\displaystyle \intL^T_0 \intL_{\partial \Omega} \mathcal W_R f(\xx,t)\partial_\nu \overline{\phi^D_k(\xx)}\sin(\lambda_{D,k}t){\rm d}\sigma(\xx){\rm d}t\label{eq:frobin}\\
&\qquad\qquad- \langle\mathcal W_R f(\cdot,T),\phi^D_k\rangle\, \cos(\lambda_{D,k}T)-\dfrac{1}{\lambda_{D,k}}\langle\partial_t \mathcal W_Rf(\cdot, T),\phi^D_k\rangle\,\sin(\lambda_{D,k}T),\nonumber\\
&\langle f,\phi^R_l\rangle=\dfrac{1}{\lambda_{R,l}}\displaystyle \intL^T_0 \intL_{\partial \Omega} \mathcal W_D f(\xx,t)\partial_\nu \overline{\phi^D_l(\xx)}\sin(\lambda_{D,l}t){\rm d}\sigma(\xx){\rm d}t\label{eq:fdirichlet}\\
&\qquad\qquad+ \langle\mathcal W_D f(\cdot,T),\phi^R_l\rangle\, \cos(\lambda_{R,l}T)+\dfrac{1}{\lambda_{R,l}}\langle\partial_t \mathcal W_Df(\cdot, T),\phi^R_l\rangle\,\sin(\lambda_{R,l}T).\nonumber
\end{eqnarray}
\end{thm}
\begin{proof}
Like the proof of Theorem \ref{thm:co1}, Lemma~\ref{lem:haltmeier}  gives
\begin{equation}\label{eq:intT}
\begin{array}{ll}
\displaystyle \intL^T_0 \intL_{\partial \Omega} \mathcal W_R f(\xx,t)\partial_\nu \phi^D_k(\xx)\sin(\lambda_{D,k}t){\rm d}\sigma(\xx){\rm d}t\\
\displaystyle \qquad\qquad=\sum_l \langle f,\phi^R_l\rangle\big(\lambda_{R,l}^2-\lambda_{D,k}^2\big)\langle\phi^R_l,\phi^D_k\rangle\,\intL^T_0\cos(\lambda_{R,l}t)\sin(\lambda_{D,k}t){\rm d}t.
\end{array}
\end{equation}
By simple direct computation, we have
\begin{equation}\label{eq:integralT}
\begin{array}{ll}
\displaystyle \intL^T_0\cos(\lambda_{R,l}t)\sin(\lambda_{D,k}t){\rm d}t\\
\displaystyle =\frac{\lambda_{D,k}-\lambda_{D,k}\cos(\lambda_{R,l}T)\cos(\lambda_{D,k}T)-\lambda_{R,l}\sin(\lambda_{R,l}T)\sin(\lambda_{D,k}T)}{\lambda_{D,k}^2-\lambda_{R,l}^2}.
\end{array}
\end{equation}
Substituting (\ref{eq:integralT}) into (\ref{eq:intT}), we have
\begin{equation*}
\begin{array}{ll}
\displaystyle \intL^T_0 \intL_{\partial \Omega} \mathcal W_R f(\xx,t)\partial_\nu \phi^D_k(\xx)\sin(\lambda_{D,k}t){\rm d}\sigma(\xx){\rm d}t\\
\displaystyle \quad=-\sum_l \langle f,\phi^R_l\rangle\langle\phi^R_l,\phi^D_k\rangle\,\lambda_{D,k}+\sum_l \langle f,\phi^R_l\rangle\langle\phi^R_l,\phi^D_k\rangle\,\lambda_{D,k}\cos(\lambda_{R,l}T)\cos(\lambda_{D,k}T)\\
\displaystyle\qquad\quad+\sum_l \langle f,\phi^R_l\rangle\langle\phi^R_l,\phi^D_k\rangle\,\lambda_{R,l}\sin(\lambda_{R,l}T)\sin(\lambda_{D,k}T)\\
\displaystyle \quad= -\langle f,\phi^D_k\rangle\,\lambda_{D,k}+\langle\mathcal W_R f(\cdot,T),\phi^D_k\rangle\,\lambda_{D,k}\cos(\lambda_{D,k}T)\\
\displaystyle \qquad\quad+\langle\partial_t \mathcal W_Rf(\cdot, T),\phi^D_k\rangle\,\sin(\lambda_{D,k}T),
\end{array}
\end{equation*}
which implies (\ref{eq:frobin}).

For \eqref{eq:fdirichlet}, it suffices to repeat the proof of \eqref{eq:frobin} with the roles of the pairs $\{\lambda_{D,k},\phi_k^{D}\}$ and $\{\lambda_{R,\ell},\phi_\ell^{R}\}$ interchanged.

\end{proof}

As shown in Theorem \ref{thm:finite}, if $\mathcal W_Bf(\xx,T)$ and $\partial_t\mathcal W_Bf(\xx,T)$ are known for $\xx \in\Omega$, then we can recover the initial function $f$ from $\mathcal W_Bf|_{\partial\Omega\times[0,T]}$.

\section{Conclusion}
We addressed the recovery of the initial pressure $f=p(\cdot,0)$ for the variable-speed wave equation on a bounded domain with smooth boundary. Within a unified setting for Dirichlet and Robin boundary conditions, we derived \emph{explicit} identities that recover the spectral coefficients $\langle f,\phi_k^B\rangle$ with respect to the eigenfunction bases of $A_B=-c(\xx)\triangle_{\xx}$, $B\in\{D,R\}$. The formulas are obtained by combining the spectral representation of the wave solution with Green’s identity and a Laplace–sine convolution, and they apply under minimal assumptions: $c\in C^\infty(\bar \Omega)$ with $0<c_m\le c\le c_M$ and a bounded domain $\Omega$ with smooth boundary. Starting from finite linear combinations of eigenfunctions, the identities extend to all $f\in L^2(\Omega,c(\xx)^{-1}{\rm d}\xx)$.

In summary, the paper provides a unified, explicit inversion framework for variable-speed PAT-type models on bounded domains with Dirichlet or Robin boundaries, clarifying how boundary measurements determine the spectral content of the initial pressure and offering a direct path to implementable reconstructions.
\appendix
\section*{Appendix A. Details for Section~\ref{sec:prelim}}
In this section, we determine the orthonormal bases for $L^2(\Omega)$ using the following problem:
\begin{equation}\label{neu}
	\begin{cases}
		-c(\xx)\triangle u(\xx)=f(\xx)\qquad \xx\in \Omega\\ 
		u|_{\partial \Omega}+\alpha\partial_\nu u|_{\partial \Omega}=0 \qquad \alpha \gneq 0.
	\end{cases}
\end{equation}
and
\begin{equation}\label{dir}
	\begin{cases}
-c(\xx)\Delta_{\xx}  u(\xx)=f(\xx) \qquad \xx\in \Omega\\
u|_{\partial \Omega}=0.
\end{cases}
\end{equation}
In the next section, we obtain the coefficients of the initial function with respect to the orthonormal bases.

We introduce the following proposition, which plays a critical role in determining the orthonormal basis: \cite[Propsition 11.34 on page 369]{brezis10}:
\begin{prop}\label{prop:basis}
Let $H$ be a separable Hilbert space over $\mathbb C$ and $T$ be a compact self-adjoint operator on $H$. Then, there exists an orthonormal basis composed of the eigenfunctions of $T$ (and the corresponding eigenvalues are real).
\end{prop}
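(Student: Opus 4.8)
The plan is to prove the statement by the classical variational construction of the spectrum: build the orthonormal basis one eigenvector at a time and then fill in the kernel. First I would record two elementary facts valid for any self-adjoint $T$. The eigenvalues are real, since $Te = \lambda e$ with $\|e\| = 1$ forces $\lambda = \langle Te, e\rangle = \langle e, Te\rangle = \overline{\lambda}$; and eigenvectors belonging to distinct eigenvalues are orthogonal, since $\lambda\langle e, e'\rangle = \langle Te, e'\rangle = \langle e, Te'\rangle = \mu\langle e, e'\rangle$ forces $\langle e, e'\rangle = 0$ when $\lambda \neq \mu$. These observations guarantee that whatever eigenvectors the construction produces can be arranged into an orthonormal family with real eigenvalues.

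The heart of the argument, and the step I expect to be the main obstacle, is the analytic lemma that a nonzero compact self-adjoint $T$ possesses an eigenvalue of modulus $\|T\|$. I would start from the identity $\|T\| = \sup_{\|x\|=1}|\langle Tx, x\rangle|$, valid for self-adjoint operators, and set $M = \sup_{\|x\|=1}\langle Tx, x\rangle$; replacing $T$ by $-T$ if necessary I may assume $M = \|T\| > 0$. Choosing a maximizing sequence $x_n$ with $\|x_n\| = 1$ and $\langle Tx_n, x_n\rangle \to M$, the estimate
\[
\|Tx_n - Mx_n\|^2 = \|Tx_n\|^2 - 2M\langle Tx_n, x_n\rangle + M^2 \le 2M^2 - 2M\langle Tx_n, x_n\rangle \longrightarrow 0
\]
shows $Tx_n - Mx_n \to 0$. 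Here compactness enters: passing to a subsequence, $Tx_n$ converges strongly to some $y$, whence $Mx_n \to y$ and, since $M \neq 0$, $x_n \to y/M =: e$ with $\|e\| = 1$ and $Te = Me$. This produces the desired eigenpair.

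Next I would iterate. Setting $H_1 = H$ and letting $e_1$ be the eigenvector just found with $|\lambda_1| = \|T\|$, I pass to $H_2 = \{e_1\}^\perp$, which is $T$-invariant precisely because $T$ is self-adjoint, and on which $T$ restricts to a compact self-adjoint operator. Repeating yields an orthonormal sequence $e_1, e_2, \dots$ of eigenvectors with $|\lambda_1| \ge |\lambda_2| \ge \cdots$, the process terminating only if some restriction becomes the zero operator. A short compactness argument shows $\lambda_n \to 0$: an infinite orthonormal set of eigenvectors with $|\lambda_n|$ bounded below would make $\{Te_n\}$ have no convergent subsequence, contradicting compactness of $T$.

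Finally I would assemble the basis. For arbitrary $x$, the partial remainder $x_n = x - \sum_{k=1}^n \langle x, e_k\rangle e_k$ lies in $H_{n+1}$, so $\|Tx_n\| \le |\lambda_{n+1}|\,\|x_n\| \le |\lambda_{n+1}|\,\|x\| \to 0$; in particular every $x$ orthogonal to all $e_k$ satisfies $Tx = 0$. Thus the orthogonal complement $V^\perp$ of $V = \overline{\mathrm{span}}\{e_k\}$ is contained in $\ker T$. Since $H$ is separable, so is $V^\perp$, and it therefore admits a countable orthonormal basis, each element of which is an eigenvector with eigenvalue $0$; adjoining it to $\{e_k\}$ produces an orthonormal basis of $H$ composed entirely of eigenfunctions of $T$, as claimed. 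The only point needing care is that the construction genuinely exhausts every nonzero eigenvalue, which is exactly what the estimate on $x_n$ delivers.
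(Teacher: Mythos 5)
Your proof is correct and complete: the variational extraction of an eigenvalue of modulus $\|T\|$, the inductive restriction to orthogonal complements, the compactness argument forcing $\lambda_n\to 0$, and the use of separability to supply an orthonormal basis of $V^\perp\subseteq\ker T$ are all sound, and this is the standard argument. The paper itself gives no proof of this proposition --- it simply cites it as Proposition 11.34 of Brezis --- so there is nothing to compare beyond noting that your argument is essentially the one found there.
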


Now we demonstrate that for any $f\in L^2(\Omega, c(\xx)^{-1}{\rm d}\xx)$, there exists a unique solution $u\in H^1(\Omega)$ to \eqref{neu}, and a unique solution $u\in H_0^1(\Omega)$ to \eqref{dir}. This can be proven through a modification of the standard method
for the case of constant coefficients, which relies on the Lax-Milgram theorem (see \cite[Theorem 1 in Sec.6.2.1]{evans10}). 
To ensure clarity, we will provide an outline of the proof:
First, for \eqref{neu}, we consider the bilinear mapping 
$$
B(u,v)=\int_\Omega\nabla_{\xx} u(\xx)\cdot\nabla_{\xx} v(\xx) {\rm d}\xx+\frac1\alpha\int_{\partial\Omega} \gamma u(\xx)\gamma v(\xx)  {\rm d}\sigma(\xx),\quad u,v\in H^1(\Omega),
$$
where $\sigma(\xx)$ is a measure on the boundary of $\Omega$ and the trace operator $\gamma:H^1(\Omega)\to H^{1/2}(\partial \Omega)$ is continous. 
It is evident that the mappings are well-defined and continuous because $c_m\leq c(\xx)\leq c_M$. Furthermore, the problem \eqref{neu} can be transformed to finding a unique solution $u\in H^1(\Omega)$ satisfying $B(u,v)=b(v)$ for all $v\in H^1(\Omega)$ where $b$ is the linear mapping 
$$b(v)= \int_{\Omega} f(\xx)v(\xx)c(\xx)^{-1}  {\rm d}\xx,\quad v\in H^1(\Omega).$$
By employing the Lax-Milgram theorem, our task simplifies to demonstrating the coerciveness of $B$ on $H^1(\Omega)$, i.e., $B(u,u)\ge C\|u\|_{H^1(\Omega)}$ with some $C>0$ for all  $u\in H^1(\Omega)$.
However, this follows immediately from the known fact that there exists $C>0$ such that  
\begin{equation}\label{funine}
\|\nabla u\|_{L^2(\Omega)}^2+\beta\|u\|_{L^2(\partial\Omega)}^2\geq C\|u\|_{H^1(\Omega)}^2
\end{equation}
for a given $\beta>0$. 
This is a functional inequality belonging to the same family as the classical Poincar\'e inequality
$$\|\nabla u\|_{L^2(\Omega)}\geq C\|u\|_{L^2(\Omega)},\quad u\in H_0^1(\Omega).$$ 
For the Dirichlet case, replace only $B$ by 
$$B(u,v)=\int_\Omega\nabla_{\xx} u(\xx)\cdot\nabla_{\xx} v(\xx) {\rm d}\xx,\quad u,v\in H_0^1(\Omega),$$
and apply the Poincar\'e inequality instead of \eqref{funine}.

Therefore, we can define the operator $T:L^2(\Omega,c(\xx)^{-1}{\rm d}\xx)\to H(\Omega)$ as $Tf=u$, where $u$ is the well-defined solution to \eqref{neu} and \eqref{dir} for $H=H^1$ and $H=H_0^1$, respectively.
Next, in order to use Proposition \ref{prop:basis}, we need to demonstrate that 
$T$ is a compact self-adjoint operator:

\begin{prop}\label{prop:compact}
Let $\Omega\subset \RR^n$ be a bounded domain.
Then the operator $T$ defined above is a compact self-adjoint operator
on $L^2(\Omega,c(\xx)^{-1}{\rm d}\xx)$. 
\end{prop}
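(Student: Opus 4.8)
The plan is to treat $T$ as an operator on $L^2(\Omega,c(\xx)^{-1}{\rm d}\xx)$ by postcomposing the solution map $f\mapsto u\in H(\Omega)$ with the inclusion $H(\Omega)\hookrightarrow L^2(\Omega,c(\xx)^{-1}{\rm d}\xx)$, where $H=H^1(\Omega)$ in the Robin case and $H=H_0^1(\Omega)$ in the Dirichlet case. Since $c_m\le c(\xx)\le c_M$, the weighted norm is equivalent to the usual $L^2(\Omega)$ norm, so I may pass between them freely. I would then verify compactness and self-adjointness separately, reducing both to the weak formulation $B(u,v)=b(v)$ together with standard Sobolev facts.

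First I would record the a priori estimate $\|Tf\|_{H^1(\Omega)}\le C\|f\|_{L^2(\Omega,c(\xx)^{-1}{\rm d}\xx)}$. Setting $v=u=Tf$ in $B(u,v)=b(v)$ and invoking the coercivity $B(u,u)\ge C\|u\|_{H^1(\Omega)}^2$ — which comes from \eqref{funine} in the Robin case and from the Poincar\'e inequality in the Dirichlet case — gives $C\|u\|_{H^1(\Omega)}^2\le b(u)=\int_\Omega f(\xx)u(\xx)c(\xx)^{-1}{\rm d}\xx$. Bounding the right-hand side by the Cauchy--Schwarz inequality in the weighted space and dividing by $\|u\|_{H^1(\Omega)}$ yields the claim, so $T$ maps $L^2(\Omega,c(\xx)^{-1}{\rm d}\xx)$ boundedly into $H(\Omega)$. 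Compactness now follows by factorization: the Rellich--Kondrachov theorem gives a compact inclusion $H(\Omega)\hookrightarrow L^2(\Omega)$ for the bounded domain $\Omega$, and composing the bounded solution map with this compact inclusion shows that $T$ carries bounded sets to precompact sets.

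For self-adjointness I would exploit the symmetry of $B$. Given $f,g$, write $u=Tf$ and $w=Tg$, so that $B(u,v)=\int_\Omega f v\,c(\xx)^{-1}{\rm d}\xx$ and $B(w,v)=\int_\Omega g v\,c(\xx)^{-1}{\rm d}\xx$ for every admissible $v$. Choosing $v=w$ in the first and $v=u$ in the second gives, with $\langle\cdot,\cdot\rangle$ denoting the inner product of $L^2(\Omega,c(\xx)^{-1}{\rm d}\xx)$, the identities $\langle f,Tg\rangle=B(u,w)$ and $\langle Tf,g\rangle=B(w,u)$. Each form is manifestly symmetric, since $\int_\Omega\nabla u\cdot\nabla w\,{\rm d}\xx$ and the boundary term $\frac1\alpha\int_{\partial\Omega}uw\,{\rm d}\sigma$ are symmetric in $u,w$; hence $B(u,w)=B(w,u)$ and $\langle Tf,g\rangle=\langle f,Tg\rangle$. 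In the complex setting of Proposition \ref{prop:basis} one replaces the bilinear forms by their sesquilinear analogues and symmetry by Hermitian symmetry, leaving the argument otherwise unchanged.

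I expect the only genuine subtlety to be the compactness step, specifically the appeal to Rellich--Kondrachov in the Robin case: the compact embedding $H^1(\Omega)\hookrightarrow L^2(\Omega)$ requires some regularity of $\partial\Omega$ (a Lipschitz boundary suffices), whereas for $H_0^1(\Omega)$ it holds for any bounded $\Omega$. This should be recorded as a standing hypothesis on $\Omega$. The boundedness and self-adjointness steps are then routine consequences of the variational formulation and require no further input.
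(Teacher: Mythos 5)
Your proof is correct and follows essentially the same route as the paper: the a priori bound $\|Tf\|_{H^1(\Omega)}\le C\|f\|_{L^2(\Omega,c(\xx)^{-1}{\rm d}\xx)}$ from coercivity of $B$, compactness via the compact embedding $H(\Omega)\hookrightarrow L^2(\Omega)$, and self-adjointness from the symmetry of $B$ (a step the paper merely asserts as ``not difficult'' and which you usefully spell out). Your caveat that the Rellich--Kondrachov embedding for $H^1(\Omega)$ requires some boundary regularity is a fair observation that the paper's bare ``bounded domain'' hypothesis glosses over.
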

\begin{proof}
Note first that $T:L^2(\Omega,c(\xx)^{-1}{\rm d}\xx)\to H(\Omega)$ is bounded.
Indeed, when $H=H^1$, from $B(u,u)=b(u)$ and H\"older's inequality, 
$$
\|\nabla_{\xx}u\|_{L^2(\Omega)}^2+\frac1\alpha\|u\|_{L^2(\partial\Omega)}^2
\leq C\|f\|_{L^2(\Omega,c(\xx)^{-1}\rm{d}\xx)}\|u\|_{L^2(\Omega)}.
$$
Combining this with \eqref{funine} implies
the boundedness 
$$\|Tf\|_{H^1(\Omega)}=\|u\|_{H^1(\Omega)}\leq C\|f\|_{L^2(\Omega,c(\xx)^{-1}\rm{d}\xx)}$$
as desired.
Similarly for $H=H_0^1$.	
	
By the boundedness above and the compactness of the injection 
$H(\Omega)\subset L^2(\Omega)$, we then conclude that 
	$T:L^2(\Omega,c(\xx)^{-1}{\rm d}\xx)\to L^2(\Omega)$  is a compact operator.
Therefore, $T:L^2(\Omega,c(\xx)^{-1}{\rm d}\xx)\to L^2(\Omega,c(\xx)^{-1}{\rm d}\xx)$
is also compact.
It is not difficult to show that $T$ is linear and self-adjoint by the definition of $T$.
The proof is complete.
\end{proof}

Consequently, Proposition \ref{prop:compact} guarantees that the orthonormal basis of $L^2(\Omega,c(\xx)^{-1}{\rm d}\xx)$ is composed of the eigenfunctions of $T$ (equivalently, of $-c(\xx)\Delta_{\xx} $) with respect to \eqref{neu} (or \eqref{dir}).
Now, let $\{\phi^R_l \}$ be the eigenfunctions of $-c(\xx)\Delta_\xx$ with eigenvalue, say $\lambda_{R,l}^2$, that is,
$$ c(\xx)\Delta_\xx\phi^R_l(\xx)+\lambda^2_{R,l}\phi^R_l(\xx)=0 \mbox{ on } \Omega
\quad\mbox{and}\quad \phi^R_l|_{\partial \Omega}+\alpha\partial_\nu\phi^R_l|_{\partial \Omega}=0.$$
The eigenvalues here must be positive; let $-c(\xx)\Delta_{\xx}  \phi(\xx)=\Lambda \phi(\xx)$.
Then,
\begin{align*}
\int_\Omega|\nabla\phi(\xx)|^2 {\rm d}\xx
&= \int_\Omega-(\Delta_{\xx} \phi(\xx))\phi(\xx) {\rm d}\xx
+\int_{\partial\Omega} \partial_\nu\phi(\xx)\phi(\xx) {\rm d}\sigma(\xx)\\
&=\Lambda\int_\Omega\phi^2(\xx)c(\xx)^{-1}{\rm d}\xx
-\frac1\alpha\int_{\partial\Omega}\phi^2(\xx){\rm d}\sigma(\xx).
\end{align*}
Hence, $\Lambda$ is positive. (In fact, $\Lambda\geq \alpha c_M^{-1}$.)

Finally, $ \{\phi^R_l \}_{l=1}^\infty$ is the orthonormal basis of $L^2(\Omega,c(\xx)^{-1}{\rm d}\xx)$ and we have
$$
f=\sum_{l=1}^\infty <f,\phi^R_l>\phi^R_l \quad\mbox{where}\quad<f,g>=\int_{\Omega}f(\xx)\overline{g(\xx)}c(\xx)^{-1}{\rm d}\xx.
$$
Similarly, we denote by $ \{\phi^D_k \}_{k=1}^\infty$ the orthonormal basis of $L^2(\Omega,c(\xx)^{-1}{\rm d}\xx)$ with respect to \eqref{dir}, 
and
$$
f=\sum_{k=1}^\infty <f,\phi^D_k>\phi^D_k.$$




\bibliographystyle{plain}


\end{document}